\documentclass[10pt]{amsart}
\usepackage{amsmath,amssymb,amsthm}
\usepackage{graphicx}
\usepackage{color}
\newtheorem{theorem}{Theorem}    

\newtheorem{corollary}{Corollary}
\theoremstyle{definition}

\newtheorem{remark}[theorem]{Remark}
\newtheorem*{remark*}{Remark}

\DeclareMathOperator{\ind}{\rm{ind}}


\title{On homogeneous quasipositive links}
\author[T.Ito]{Tetsuya Ito}
\address{Department of Mathematics, Kyoto University, Kyoto 606-8502, JAPAN}
\email{tetitoh@math.kyoto-u.ac.jp}
 
\begin{document}

\begin{abstract}
We discuss when homogeneous quasipositive links are positive. In particular, we show that a homogeneous diagram of a quasipositive link whose number of Seifert circles is equal to the braid index is a positive diagram. 
\end{abstract}

\maketitle

An $n$-braid is \emph{quasipositive} if it is a product of conjugates of the standard generators $\sigma_{1},\sigma_{2},\ldots,\sigma_{n-1}$. A link $K$ in $S^{3}$ is 
\begin{itemize}
\item[--] \emph{quasipositive} if $K$ can be represented by the closure of a quasipositive braid. 
\item[--] \emph{positive} if $K$ can be represented by a positive diagram, a diagram without negative crossing.
\item[--] \emph{homogeneous} if $K$ can be represented by a homogenous diagram (see \cite{Cr} for definition)
\end{itemize}
A homogenous diagram are common generalization of positive diagram and alternating diagram, sharing various nice properties of such diagrams.

In \cite[Question (4)]{Baader1} Baader asked whether an alternating quasipositive link is positive. He also posed a more general question, whether a homogenous quasipositive link is positive \cite[Problems (i)]{Baader2}.

Recently, in \cite{Ore} Orevkov gave an affirmative answer to the first Baader's question for a class of alternating links which he call \emph{DHL link} (Diao-Hertyi-Liu link); An alternating link is DHL link if it has an alternating diagram whose number of Seifert circles is equal to the braid index.
In \cite{DHL} Diao-Hertyi-Liu gave a simple and clear characterization of such an alternating diagram; for an alternating diagram $D$ of a link $K$, the number of Seifert circles of $D$ is equal to the braid index of $K$ if and only if is there is no pair of Seifert circles connected by a single crossing \cite{DHL}.

In this note we discuss Baader's second question, the positivity of homogenous quasipositive links.

For a link $K$ let $SL(K)$ be the maximal self-linking number and $b(K)$ be the braid index. For a link diagram $D$ of $K$, we denote by $O(D)$ the number of Seifert circles of $D$, and denote by $c_{+}(D), c_{-}(D)$ the number of positive and negative crossings of $D$.
We define the \emph{self-linking number} of diagram $D$ by $sl(D)=-O(D)+c_{+}(D)-c_{-}(D)$ (this is the self-linking number of closed braid obtained from $D$ by Vogel-Yamada method \cite{Vo,Ya}).

\begin{theorem}
\label{theorem:main}
Let $D$ be a homogenous diagram of a quasipositive link $K$.
If $SL(K)=sl(D)$, $D$ is a positive diagram.
\end{theorem}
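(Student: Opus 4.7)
My plan is to combine a closed-form formula for $sl(D)$, coming from Cromwell's theorem on homogeneous diagrams, with Rudolph's equality $SL(K)=-\chi_{s}(K)$ for quasipositive links, and then close the gap between the resulting chain of inequalities.

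Since $D$ is homogeneous, Cromwell's theorem guarantees that the Seifert surface $F_{D}$ produced by the Seifert algorithm realizes the Euler characteristic $\chi(K)$, so $O(D)-c(D)=-\chi(K)$. Using $c(D)=c_{+}(D)+c_{-}(D)$, the definition of $sl(D)$ rewrites as
\[
sl(D)\;=\;c_{+}(D)-c_{-}(D)-O(D)\;=\;-\chi(K)-2c_{-}(D).
\]
On the other hand, since $K$ is quasipositive, Rudolph's sharpening of the Bennequin inequality gives $SL(K)=-\chi_{s}(K)$, where $\chi_{s}(K)$ denotes the maximal Euler characteristic of a smoothly embedded surface in $B^{4}$ bounded by $K$. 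Substituting the hypothesis $sl(D)=SL(K)$ produces the key identity
\[
\chi_{s}(K)-\chi(K)\;=\;2c_{-}(D).
\]

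The inequality $\chi_{s}(K)\geq\chi(K)$, obtained by pushing $F_{D}$ into the $4$-ball, makes this identity consistent with $c_{-}(D)\geq 0$ but does not by itself force $c_{-}(D)=0$. The crux of the proof is therefore to establish the matching upper bound $\chi_{s}(K)\leq\chi(K)$, equivalently $g_{s}(K)\geq g(K)$; combined with the displayed identity, this forces $c_{-}(D)=0$ and hence the positivity of $D$.

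This last step is the main obstacle. I would attack it by comparing two surfaces bounding $K$: the homogeneous Seifert surface $F_{D}\subset S^{3}$, and a Bennequin ribbon surface $\Sigma_{q}\subset B^{4}$ constructed from a quasipositive braid representative of $K$. Under the hypothesis their Euler characteristics differ by exactly $2c_{-}(D)$, and the constraint that $D$ realizes the maximum self-linking number forces the transverse closed braid obtained from $D$ via Vogel--Yamada to be an $SL$-maximal transverse representative. A block-by-block analysis of the homogeneous decomposition of $D$, exploiting the fact that in each negative block the contribution to $F_{D}$ is a genuinely knotted twisted band, should show that a negative block cannot be cancelled by a ribbon move in $B^{4}$ without contradicting either $sl(D)=SL(K)$ or the Cromwell identity $\chi(F_{D})=\chi(K)$. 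Alternatively, a HOMFLY-polynomial approach, based on the fact that the Morton--Franks--Williams inequality saturates for quasipositive links and that the extremal $v$-coefficient of the HOMFLY polynomial is controlled by the sign structure of a homogeneous diagram, should also detect a negative block.
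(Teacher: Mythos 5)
Your reduction is fine as far as it goes: Cromwell's theorem gives $\chi(F_D)=O(D)-c(D)=\chi(K)$, hence $sl(D)=-\chi(K)-2c_-(D)$, and sharpness of the slice--Bennequin inequality for quasipositive links gives $SL(K)=-\chi_s(K)$, so the hypothesis yields $\chi_s(K)-\chi(K)=2c_-(D)$. But the step you yourself flag as ``the main obstacle'' --- establishing $\chi_s(K)\le\chi(K)$, equivalently $c_-(D)=0$ --- is exactly the content of the theorem, and nothing in your proposal actually proves it. It certainly cannot come from quasipositivity alone: quasipositive links in general satisfy only $\chi_s(K)\ge\chi(K)$, and the gap can be strict (for instance the closure of $\sigma_1^{-3}\sigma_2\sigma_1^{3}\cdot\sigma_2$, a mirror of $8_{20}$, is a quasipositive slice knot of Seifert genus one). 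So any argument must use homogeneity together with the hypothesis $sl(D)=SL(K)$ in a quantitative way, and your two suggested routes do not do this: the ``block-by-block ribbon move'' analysis is only a heuristic, and the HOMFLY route rests on the assertion that the Morton--Franks--Williams inequality is sharp for quasipositive links, which is not a known theorem and is not something you can invoke.

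The paper closes exactly this gap with a different, sharper diagrammatic input: Abe's formula for the Rasmussen invariant of a non-split homogeneous diagram, $s(K)=sl(D)+2O_+(D)-1$, where $O_+(D)$ is the number of components after smoothing all negative crossings. Combined with $SL(K)=s(K)-1$ (sharp slice--Bennequin for quasipositive links), the hypothesis $SL(K)=sl(D)$ forces $O_+(D)=1$, and for a non-split homogeneous diagram $O_+(D)=1$ is only possible when $c_-(D)=0$. The essential difference from your approach is that Abe's formula ties a four-dimensional invariant ($s$, hence $\chi_4$) directly to a refined count extracted from the homogeneous diagram, whereas comparing $\chi_s$ with the three-dimensional $\chi$ via Cromwell's theorem loses exactly the information needed to rule out negative crossings. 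If you want to salvage your outline, the missing lemma you should aim for is precisely an inequality of the form $\chi_s(K)\le\chi(K)+2\bigl(c_-(D)-(O_+(D)-1)\bigr)$ or, more directly, Abe's identity itself.
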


This generalizes Orevkov's theorem for homogenous case;

\begin{corollary}
\label{cor:main}
Let $D$ be a homogenous diagram of a quasipositive link $K$. If $O(D)=b(K)$\footnote{In \cite{HIK} we call a homogeneous link having such a homogeneous diagram \emph{strongly homogeneous.}}, then $D$ is a positive diagram.
\end{corollary}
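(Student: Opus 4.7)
The plan is to derive Corollary~\ref{cor:main} from Theorem~\ref{theorem:main} by verifying the hypothesis $SL(K) = sl(D)$ under the assumption $O(D) = b(K)$.

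First, applying the Vogel--Yamada algorithm to $D$ produces a closed braid representative $\widehat{\beta}$ of $K$ on exactly $n = O(D) = b(K)$ strands, with exponent sum $w(D) = c_{+}(D) - c_{-}(D)$. By construction $sl(\widehat{\beta}) = w(D) - O(D) = sl(D)$, and Bennequin's inequality (in its braid form) gives one direction for free: $sl(D) \leq SL(K)$.

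For the reverse inequality, I would combine two consequences of the HOMFLY polynomial. The Morton--Franks--Williams bound on self-linking number gives
\[
SL(K) \leq \min\deg_{v} P_{K}(v,z) - 1,
\]
while Morton's inequality applied to the Vogel--Yamada braid $\widehat{\beta}$ on $n = O(D)$ strands with exponent sum $w(D)$ yields
\[
\min\deg_{v} P_{K}(v,z) \geq w(D) - O(D) + 1 = sl(D) + 1.
\]
If I can upgrade Morton's lower bound to an equality $\min\deg_{v} P_{K}(v,z) = sl(D) + 1$, then $SL(K) \leq sl(D)$, and combined with Bennequin one has $SL(K) = sl(D)$. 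Theorem~\ref{theorem:main} then immediately produces the desired conclusion.

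The main obstacle is establishing the sharpness of Morton's lower bound for the Vogel--Yamada braid of $D$. I expect this to come from Cromwell's HOMFLY analysis of homogeneous links: homogeneity makes the Vogel--Yamada closure behave like a positive braid closure for the purpose of Morton's inequality, in the sense that the lower degree bound is attained whenever the resulting braid is not Markov destabilizable. The hypothesis $O(D) = b(K)$ forces the Vogel--Yamada braid to sit on the minimum number of strands, excluding the pathological homogeneous examples (such as the destabilizable braid $\sigma_{1}\sigma_{2}^{-1}$) on which sharpness fails; this extends the classical Franks--Williams equality from positive braid closures to the homogeneous setting and supplies the step required to apply Theorem~\ref{theorem:main}.
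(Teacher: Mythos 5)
Your reduction to Theorem~\ref{theorem:main} is the right skeleton (it is also how the paper proceeds), but the way you propose to verify $SL(K)=sl(D)$ has a genuine gap. Everything hinges on upgrading Morton's bound to the equality $\min\deg_{v}P_{K}(v,z)=sl(D)+1$ for the Vogel--Yamada braid of a homogeneous diagram on $O(D)=b(K)$ strands, and you give no proof of this, only the expectation that it follows from Cromwell's work and an analogy with positive braids. Cromwell's analysis of homogeneous diagrams controls the $z$-degree of $P_K$ (that is what detects the genus), not the $v$-degree; the Vogel--Yamada braid of a homogeneous diagram need not itself be a homogeneous braid; and sharpness of the lower $v$-degree bound is not a known theorem in this setting --- indeed the Murasugi--Przytycki refinement (\ref{eqn:MP-MFW}) and their sharpness conjecture for alternating diagrams, which the paper points out is still open, show that exactly this kind of sharpness statement is delicate, and Morton--Franks--Williams-type sharpness is known to fail for some braids realizing the braid index, so ``not Markov destabilizable'' is not by itself sufficient. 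In effect your key step asserts, without proof, that the HOMFLY bound on the maximal self-linking number is sharp for every link admitting a homogeneous diagram with $O(D)=b(K)$; that is stronger than what is needed and is not available.

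The paper gets $SL(K)=sl(D)$ by a different and much shorter route that uses no HOMFLY input and no homogeneity at this stage: since $O(D)=b(K)$, the Vogel--Yamada closed braid $\beta_D$ lives on the minimal number of strands, and the generalized Jones conjecture, proved in \cite{DP,LM}, guarantees that any closed braid at the braid index attains the maximal self-linking number, so $SL(K)=sl(\beta_D)=sl(D)$ and Theorem~\ref{theorem:main} applies. To salvage your HOMFLY route you would have to actually prove the equality $\min\deg_{v}P_{K}=sl(D)+1$ under your hypotheses, which is essentially a Murasugi--Przytycki-type sharpness result and currently a missing ingredient rather than a citable fact.
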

\begin{proof}[Proof of Corollary \ref{cor:main}]
Following Vogel-Yamada method \cite{Vo,Ya}, one can convert $D$ into a closed $O(D)$-braid diagram $\beta_D$ without changing the writhe $c_+(D)-c_{-}(D)$.
Since $\beta_D$ is a closed braid with minimum braid index, by generalized Jones' conjecuture proven in \cite{DP,LM}, the closed braid $\beta_D$ viewed as a transverse link attains the maximal self-linking number $SL(K)$ hence $SL(K)=sl(D)$.
\end{proof}

\begin{proof}[Proof of Theorem \ref{theorem:main}]
In the following we assume that $D$ is non-split, since to prove the theorem it is sufficient to treat the case $D$ is non-split.

Abe \cite{Abe} showed that for a homogeneous non-split diagram $D$ of $K$, the Rassumussen invariant $s(K)$ of $K$ is given by\footnote{We remark that the Rasmussen's invariant $s(K)$ is originally defined for knots \cite{Ra}. For link case we use Beliakova-Wherli's generalization of Rasmussen invariant for links \cite{BW}. Although \cite{Abe} proves the formula only for knot case, his proof can be applied to link cases, as he noted and mentioned in \cite{AT} (if the diagram is non-split).} 
\[ s(K) = -O(D)+c_+(D)-c_{-}(D) + 2O_{+}(D)-1 = sl(D) + 2O_{+}(D)-1 \]
where $O_+(D)$ is the number of connected components of $D$ obtained by smoothing all the negative crossings.

Since $K$ is quasipositive, slice Bennequin inequality gives an equality hence
\[ SL(K)= s(K)-1 = -\chi_4(K).\]
Here $\chi_4(K)$ the maximum euler characteristic of smooth oriented surface in $B^{4}$ whose boundary is $K$ (see \cite{Shu}). 
Therefore by our hypothesis $sl(K)=sl(D)$, we get $O_+(D)=1$. For a homogeneous non-split diagram $D$, $O_+(D)=1$ happens only if $c_{-}(D)=0$. Therefore $D$ is positive.

\end{proof}
 
Actually our argument shows that a homogeneous diagram of a quasipositive link should be close to a positive diagram in the following sense.

\begin{theorem}
Let $D$ be a non-split homogeneous diagram of a quasipositive link $K$. 
Then $O(D)-b(K) \geq O_{+}(D)-1$ holds.
\end{theorem}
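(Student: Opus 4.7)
The plan is to combine the Rasmussen-invariant identity underlying the proof of Theorem~\ref{theorem:main} with the generalized Jones conjecture of \cite{DP,LM} in its full quantitative form, rather than only its minimum-braid-index specialization used in Corollary~\ref{cor:main}.

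First, I would isolate an identity already contained in the proof of Theorem~\ref{theorem:main}: for \emph{any} non-split homogeneous diagram $D$ of a quasipositive link $K$, Abe's formula and the slice Bennequin equality give
\[
SL(K)-sl(D)=2\bigl(O_{+}(D)-1\bigr),
\]
with no further hypothesis needed. Second, as in Corollary~\ref{cor:main}, I would apply the Vogel--Yamada algorithm \cite{Vo,Ya} to convert $D$ into a closed $O(D)$-braid representative $\beta_D$ of $K$ whose writhe, and hence self-linking number, agrees with that of $D$. Third, I would invoke the quantitative form of the generalized Jones conjecture: for any $n$-braid representative $\beta$ of a non-split link $K$,
\[
sl(\beta)\;\geq\;SL(K)-2\bigl(n-b(K)\bigr).
\]
Feeding $\beta=\beta_D$ with $n=O(D)$ into this bound and combining with the identity yields $2(O_{+}(D)-1)=SL(K)-sl(D)\leq 2(O(D)-b(K))$, which is the desired inequality.

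The main obstacle is confirming that this lower-bound form of the generalized Jones conjecture is a direct consequence of what \cite{DP,LM} establish. Corollary~\ref{cor:main} only uses the minimum-braid-index case, where the bound collapses to $sl=SL(K)$; here $\beta_D$ can sit above minimum braid index and one needs quantitative control on how far below $SL(K)$ its self-linking number can lie. Morally this is clear from the Markov-theorem-without-stabilization picture: $\beta_D$ reaches a minimum-braid-index representative of $K$ by a sequence of destabilizations and exchange moves, only the negative destabilizations affect $sl$, each such move increases $sl$ by $2$ while decreasing the strand count by $1$, so the transverse defect $(SL(K)-sl(\beta_D))/2$ is bounded above by the braid defect $O(D)-b(K)$.
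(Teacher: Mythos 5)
Your proposal is correct and is essentially the paper's own argument: the paper combines the identity $SL(K)=sl(D)+2(O_{+}(D)-1)$ (Abe's formula plus the slice-Bennequin equality for quasipositive links, exactly as in Theorem \ref{theorem:main}) with the quantitative generalized Jones inequality $sl(D)\geq SL(K)-2(O(D)-b(K))$ cited from \cite{DP,LM}. The Vogel--Yamada conversion you spell out is already built into the paper's definition of $sl(D)$, and the cone-shaped quantitative form of the generalized Jones conjecture whose availability you flag as the main obstacle is precisely the statement the paper invokes from those references.
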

\begin{proof}
By generalized Jones' conjecture proven in \cite{DP,LM}
\[ sl(D)\geq SL(K)-2(O(D)-b(K)).\]
Hence
\begin{align*}
sl(D) & \geq SL(K)-2(O(D)-b(K)) =  sl(D) +2(O_+(D)-1)-2(O(D)-b(K))
\end{align*}
\end{proof}

Unfortunately, in general it is not true that a homogeneous (alternating) link $K$ has a homogeneous (alternating) diagram $D$ with $SL(K)=sl(D)$ (see Remark \ref{rem} below).

To overcome the defect, we give a different class of homogeneous diagrams
having an affirmative answer to Baader's question.
In \cite{MP} Murasugi-Prytycki gave an refinement of famous Motron-Franks-Williams inequalities.
\begin{equation}
\label{eqn:MP-MFW} \min \deg_{v} P_{K}(v,z) \geq -O(D)+c_{+}(D)-c_{-}(D)+1 + 2 \ind_-(D)\end{equation}
\begin{equation}
\label{eqn:MP-MFW2} \max \deg_{v} P_{K}(v,z) \leq O(D)+c_{+}(D)-c_{-}(D)-1 - 2 \ind_+(D)\end{equation}
\begin{equation}
\label{eqn:MP-MFW3} b(K) \leq O(D)-\ind(D)
\end{equation}
We call the last inequaility (\ref{eqn:MP-MFW3}) of the braid index the \emph{MP inequality} (Murasugi-Prytycki inequality).

Here $P_{K}(v,z)$ is the HOMFLY polynomial of $K$, and $\ind_{\pm}(D),\ind(D)$ are the index of the Seifert graph of $D$ (see \cite{MP} for definitions).
In particular, they proved for a given knot diagram $D$ of $K$ one can find a diagram $D'$ of the same knot $K$ so that
\[ sl(D') = sl(D) + 2\ind_{-}(D),\quad O(D') = O(D)-\ind_-(D)\]
\[ sl(D') = sl(D), \quad O(D') = O(D)-\textrm{ind}_{+}(D).\]
For general link diagram $D$,
\[ \textrm{ind}_{+}(D)+\textrm{ind}_{-}(D) \geq \textrm{ind}(D).\]
When $D$ is homogeneous, $\ind(D)=\ind_-(D)+\ind_+(D)$, and there is a diagram $D'$ such that 
\[ sl(D') = sl(D) + 2\ind_{-}(D),\quad O(D') = O(D)-\ind(D)\]

\begin{remark}
\label{rem}
If $D$ is a reduced alternating diagram of $\ind_{-}(D)>1$, 
by (\ref{eqn:MP-MFW}) $sl(D)=SL(K)$ cannot happen. Since for a reduced alternating link $K$, the writhe of a reduced alternating diagram is invariant of an alternating link, such a link does not have an alternating diagram $D$ with $sl(D)=SL(K)$.
\end{remark}

Using these results we give another class of links having an affirmative answer to Baader's question.

\begin{theorem}
\label{theorem:main2}
Let $D$ be an irreducible homogeneous diagram of a quasipositive link. If $sl(D)+2\ind_{-}(D) = SL(K)$, then $D$ is positive.
\end{theorem}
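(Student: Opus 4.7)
My plan is to combine the Murasugi--Przytycki reduction with Theorem \ref{theorem:main}, parallel to how Corollary \ref{cor:main} was derived from the Vogel--Yamada construction.

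First I would invoke the homogeneous version of the MP reduction described in the preceding paragraphs to obtain a homogeneous diagram $D'$ of $K$ satisfying
\[ sl(D')=sl(D)+2\ind_{-}(D), \qquad O(D')=O(D)-\ind(D). \]
That $D'$ remains homogeneous is the essential preservation property of the reduction, which acts block by block on the Seifert graph. Under our hypothesis $sl(D)+2\ind_{-}(D)=SL(K)$ we then have $sl(D')=SL(K)$, so Theorem \ref{theorem:main} applies to $D'$ and concludes that $D'$ is a positive diagram.

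Second I would transfer positivity back to $D$ itself. Applying Abe's formula $s(K)=sl(D)+2O_{+}(D)-1$ directly to $D$, together with the slice Bennequin equality $s(K)=SL(K)+1$ valid for the quasipositive link $K$ and the hypothesis, yields
\[ O_{+}(D)=\ind_{-}(D)+1. \]
The same block-tree analysis of the Seifert graph used in the proof of Theorem \ref{theorem:main} provides a lower bound on $O_{+}(D)-1$ in terms of the negative blocks of $D$ (each negative block, when removed, contributes at least one new component to the positive part), while $\ind_{-}(D)$ furnishes an upper bound on the ``amount of reducibility'' of those blocks. Under irreducibility of $D$, the former strictly exceeds the latter whenever any negative block is present. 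Consequently, the displayed equality is only possible when there is no negative block at all, i.e., $c_{-}(D)=0$, giving positivity of $D$.

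The main obstacle is the last comparison: one has to verify, block by block on the Seifert graph of an irreducible homogeneous diagram, the strict inequality between the contribution of each negative block to $O_{+}(D)-1$ and its contribution to $\ind_{-}(D)$. Equivalently, one must prove that $O_{+}(D)\geq \ind_{-}(D)+2$ for an irreducible homogeneous non-split diagram with $c_{-}(D)>0$. Once this combinatorial estimate is in hand, the theorem follows from the MP reduction (producing $D'$ with $sl(D')=SL(K)$, whence Theorem \ref{theorem:main} gives $D'$ positive) combined with Abe's formula applied directly to $D$.
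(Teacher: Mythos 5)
Your second half is, in outline, exactly the paper's argument: reduce to the non-split case, apply Abe's formula $s(K)=sl(D)+2O_{+}(D)-1$ together with the sharp slice--Bennequin equality $SL(K)=s(K)-1$ for quasipositive links, and combine with the hypothesis to get $O_{+}(D)=\ind_{-}(D)+1$. But the decisive step --- that an irreducible homogeneous non-split diagram with $c_{-}(D)>0$ satisfies $O_{+}(D)\geq \ind_{-}(D)+2$ --- is precisely what you label ``the main obstacle'' and leave unproven, so as written the proposal is conditional: the theorem is reduced to an unproved combinatorial estimate. The paper closes exactly this gap via the block structure of a homogeneous diagram: the Seifert graph is a $\ast$-product $\Gamma(D_{1})\ast\cdots\ast\Gamma(D_{n})$ of positive and negative special blocks; irreducibility ensures no $\Gamma(D_i)$ has a valence-one vertex; $\ind_{\pm}$ is additive over the blocks (each $\Gamma(D_i)$ being bipartite, \cite[Theorem 2.4]{MP}); $O_{+}(D)-1=\sum_{i}(O_{+}(D_i)-1)$; and for each negative special block without valence-one vertices the quantities $\ind_{-}(D_i)$ and $O_{+}(D_i)-1$ differ strictly, so the equality $\ind_{-}(D)=O_{+}(D)-1$ forces the absence of negative blocks, i.e.\ $c_{-}(D)=0$. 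Incidentally, the direction you want, $\ind_{-}(D_i)\leq O_{+}(D_i)-2$, is the one that actually holds (for a reduced negative special block $O_{+}(D_i)=O(D_i)$, while the Murasugi--Przytycki braid-index bound applied to the nontrivial block link gives $\ind(D_i)\leq O(D_i)-2$); the paper's bullet states the reverse strict inequality, but either strict direction, summed over blocks, contradicts the equality. To complete your proof you must actually establish this block-by-block estimate, e.g.\ along the lines just indicated.

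Your first half is both unjustified and unnecessary. The Murasugi--Przytycki reduction yields a diagram $D'$ with $sl(D')=sl(D)+2\ind_{-}(D)$ and $O(D')=O(D)-\ind(D)$, but neither \cite{MP} nor this paper asserts that $D'$ is again homogeneous; the reduction moves need not preserve homogeneity, so Theorem \ref{theorem:main} cannot simply be applied to $D'$. Moreover, even granting that $D'$ is positive, this says nothing about $D$ itself, which is why your own conclusion never uses this step. Drop it, and supply the missing estimate; the rest of your argument then coincides with the paper's proof.
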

\begin{proof}
As in Theorem \ref{theorem:main}, it is sufficient to treat the case $D$ is non-split. Then by quasipositivity $SL(K)=sl(D)+2(O_+(D)-1)$, we have $\ind_{-}(D) = O_{+}(D)-1$. However, since $D$ is irreducible this is possible only if $\ind_{-}(D)=0$. This follows from the following definitions, results, and observations:
\begin{itemize}
\item the Seifert graph $\Gamma(D)$ of homogeneous diagram $D$ is written as the $\ast$-product $\Gamma(D_1)\ast \Gamma(D_2) \ast \cdots \ast \Gamma(D_{n})$, where each diagram $D_i$ is a positive or negative special diagram. Moreover, if $D$ is reduced, no $D_{i}$ contains a vertex of valence one (definition of homogeneous diagram \cite{Cr}).
\item $\ind_{\pm}(D) = \sum_{i=1}^{n} \ind_{\pm}(D_i)$, since $\Gamma(D_i)$ is bipartite \cite[Theorem 2.4]{MP}.
\item If $D$ is connected, $O_{+}(D)-1 = \sum_{i=1}^{n}(O_{+}(D_i)-1)$ (follows from the definition of $\ast$-product).
\item For a negative special diagram $D_i$, $\ind_{-}(D_i) > O_{+}(D_i) - 1$ whenever its Seifert graph $\Gamma(D_i)$ has no vertices of valence one (follows from the definition of index).
\end{itemize}
Thus $O_+(D)=1$ and $c_-(D)=0$, as desired.
\end{proof}

In particular, we have the following.

\begin{corollary}
Let $D$ be an irreducible homogeneous diagram of a quasipositive link. If the MP inequality of the braid index (\ref{eqn:MP-MFW3}) is equality, then $D$ is positive.
\end{corollary}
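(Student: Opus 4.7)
The plan is to reduce the corollary directly to Theorem \ref{theorem:main2} by producing an auxiliary diagram $D'$ of $K$ whose Seifert-circle count realizes the braid index, and then invoking the generalized Jones conjecture to pin down $sl(D')$.

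First, I would invoke the homogeneous refinement of the Murasugi--Przytycki construction recalled just before Remark \ref{rem}: since $D$ is homogeneous, there exists a diagram $D'$ of the same link $K$ with
\[ sl(D') = sl(D) + 2\ind_{-}(D), \qquad O(D') = O(D) - \ind(D). \]
The hypothesis that the MP inequality (\ref{eqn:MP-MFW3}) is an equality gives $O(D') = b(K)$, so $D'$ has the minimum possible number of Seifert circles.

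Next, following the template used in the proof of Corollary \ref{cor:main}, I would apply Vogel--Yamada to $D'$ to obtain a closed $O(D')$-braid $\beta_{D'}$ without changing the writhe or the number of Seifert circles; in particular, $sl(\beta_{D'}) = sl(D')$. Since $\beta_{D'}$ is a closed braid on $b(K)$ strands, the generalized Jones conjecture of \cite{DP,LM} implies that, viewed as a transverse link, it attains the maximal self-linking number. Hence $sl(D') = SL(K)$, which combined with the previous display gives
\[ sl(D) + 2\ind_{-}(D) = SL(K). \]
This is exactly the hypothesis of Theorem \ref{theorem:main2}, and since $D$ is assumed irreducible and homogeneous, that theorem applies and forces $D$ to be positive.

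In summary the argument is almost formal: the substantive content is already in Theorem \ref{theorem:main2} and the Murasugi--Przytycki/Vogel--Yamada/generalized Jones package. The only bookkeeping point to be careful about is that the invariants really track as claimed; namely that the MP auxiliary diagram $D'$ exists as a diagram of the same link $K$ with the stated $sl$ and $O$, and that Vogel--Yamada, preserving both the writhe and the Seifert-circle count, passes $sl(D')$ unchanged to the self-linking of $\beta_{D'}$ so that the generalized Jones conjecture may legitimately be applied. Both facts are in the cited literature, so no genuine obstacle arises.
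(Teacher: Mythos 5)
Your proposal is correct and is exactly the argument the paper intends (the paper leaves this corollary unproved, deducing it from "our discussion so far"): equality in the MP inequality plus the homogeneous Murasugi--Przytycki construction gives a diagram $D'$ with $O(D')=b(K)$ and $sl(D')=sl(D)+2\ind_{-}(D)$, Vogel--Yamada and the generalized Jones conjecture then force $sl(D')=SL(K)$ as in the proof of Corollary \ref{cor:main}, and Theorem \ref{theorem:main2} applies. No gaps.
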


In \cite{MP} it is conjectured that for alternating diagram $D$, the MP inequality of the braid index is an equality. Our discussion so far shows that an affirmative answer to Murasugi-Prytycki's conjecture implies an affirmative answer to Baader's first question.

Finally, although it is irrelevant to our study of homogeneous quasipositive links, we give an improvement of the MP inequality (\ref{eqn:MP-MFW3}) which is interesting in its own right.

\begin{theorem}[Refined MP inequality]
\label{theorem:refined-MP}
For any link diagram $D$ of a link $K$,
\[ b(K) \leq O(D)-(\ind_{+}(D)+\ind_{-}(D)) \]
\end{theorem}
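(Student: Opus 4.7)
The plan is to combine the two Seifert-circle reduction procedures of Murasugi--Prytycki recalled just before Remark~\ref{rem}. From a diagram $D$ of $K$, MP produce two new diagrams of the same link: $D^{-}$ with $O(D^{-})=O(D)-\ind_{-}(D)$ (and $sl(D^{-})=sl(D)+2\ind_{-}(D)$), and $D^{+}$ with $O(D^{+})=O(D)-\ind_{+}(D)$ (with $sl$ unchanged). In both cases the reduction is a local move that locates a bipartite subgraph of the Seifert graph $\Gamma(D)$ whose edges all share a single sign and then merges Seifert circles along these edges. Since for any diagram $E$ of $K$ the Vogel--Yamada procedure produces a closed braid on $O(E)$ strands, one has $b(K)\leq O(E)$ for free; it therefore suffices to exhibit a diagram of $K$ with exactly $O(D)-\ind_{+}(D)-\ind_{-}(D)$ Seifert circles.

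First I would apply the negative reduction to $D$ to obtain $D^{-}$, and then apply the positive reduction to $D^{-}$ to obtain a diagram $D^{-+}$ of $K$. The desired inequality would then follow from the chain
\[ b(K)\leq O(D^{-+})=O(D^{-})-\ind_{+}(D^{-})\leq O(D)-\ind_{-}(D)-\ind_{+}(D), \]
provided one establishes the key estimate $\ind_{+}(D^{-})\geq \ind_{+}(D)$.

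The main obstacle is exactly this estimate: I need to know that MP's negative reduction does not destroy the positive index. The intuition is that the negative reduction acts only on the negative part of $\Gamma(D)$, picking a bipartite subgraph $B^{-}$ of negative-crossing edges that realizes $\ind_{-}(D)$ and merging Seifert circles along the edges of $B^{-}$; in particular, no positive crossing is created or deleted. A bipartite subgraph $B^{+}$ of positive-crossing edges realizing $\ind_{+}(D)$ should therefore descend to a bipartite subgraph of the positive part of $\Gamma(D^{-})$ with the same number of edges (even though some of its vertices may be identified when Seifert circles merge), giving the required inequality on the index.

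A conceptually cleaner route, which I would pursue if the bookkeeping for the sequential argument becomes delicate, is to execute both reductions in parallel. The disjoint union $B^{+}\sqcup B^{-}\subset \Gamma(D)$ is a subgraph each of whose components is bipartite with edges of uniform sign, and the MP local moves associated with $B^{+}$ and with $B^{-}$ act on disjoint sets of crossings, so they should commute. Performing them simultaneously would yield in one step a diagram $\widetilde D$ of $K$ with $O(\widetilde D)=O(D)-\ind_{+}(D)-\ind_{-}(D)$, and the refined MP inequality $b(K)\leq O(\widetilde D)$ follows from Vogel--Yamada applied to $\widetilde D$.
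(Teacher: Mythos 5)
Your strategy is constructive: you want to exhibit an actual diagram of $K$ with $O(D)-\ind_{+}(D)-\ind_{-}(D)$ Seifert circles and then invoke Vogel--Yamada. This is a genuinely different route from the paper, but it has a gap at exactly the step you flag as the ``key estimate.'' Neither $\ind_{+}(D^{-})\geq \ind_{+}(D)$ nor the claimed commutation of the two reductions is established, and this is not a bookkeeping issue: the Murasugi--Przytycki reduction realizing $\ind_{-}(D)$ merges Seifert circles, i.e.\ identifies vertices of the Seifert graph, and the index is not monotone under such identifications. For instance, two positive edges that were independent in $\Gamma(D)$ can become parallel edges (or an edge can acquire its endpoints identified) after the negative reductions, and the MP index of the positive part can strictly drop; the index is defined by an inductive independence condition on edges, not merely as the size of a bipartite subgraph with edges of one sign, so ``the subgraph $B^{+}$ descends with the same number of edges'' does not give $\ind_{+}(D^{-})\geq\ind_{+}(D)$. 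Likewise, the reduction move for an edge is not supported only on that edge's crossings --- it slides a whole Seifert circle, and Seifert circles incident to positive edges are typically also incident to negative ones --- so the ``disjoint supports, hence commute'' argument does not go through as stated. Note that the paper itself points out, immediately after Theorem~\ref{theorem:refined-MP}, that its proof does \emph{not} produce a diagram $D'$ with $O(D')=O(D)-(\ind_{+}(D)+\ind_{-}(D))$ and that finding such a diagram is an open question; your proposal is essentially an attempt to answer that question, which is strictly harder than the theorem. If the sequential or parallel reduction worked in general, the refined bound would already follow from Murasugi--Przytycki's original construction, whereas they only obtain $b(K)\leq O(D)-\ind(D)$ with $\ind(D)\leq\ind_{+}(D)+\ind_{-}(D)$.

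For comparison, the paper's proof is non-constructive and short: it applies the Bennequin-type consequence of the MP construction, $sl(D)+2\ind_{-}(D)\leq SL(K)$, both to $D$ and to its mirror $\overline{D}$ (using $\ind_{-}(\overline{D})=\ind_{+}(D)$), adds the two inequalities, and then uses the generalized Jones conjecture (Dynnikov--Prasolov, LaFountain--Menasco) in the form $SL(K)+SL(\overline{K})=-2b(K)$ together with $sl(D)+sl(\overline{D})=-2O(D)$. So the paper trades your missing combinatorial lemma for a deep transverse-link input. To salvage your approach you would need to prove the monotonicity of $\ind_{+}$ under the negative reduction (or a simultaneous-reduction statement), which is currently not available.
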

\begin{proof}
Let $\overline{D}$ and $\overline{K}$ be the mirror image of $D$ and $K$, respectively. Since $\ind_{-}(\overline{D})=\ind_{+}(D)$ by definition, 
\[ sl(D)+2\ind_{-}(D) \leq SL(K), \quad sl(\overline{D})+2\ind_{+}(D) \leq SL(\overline{K}) \]
hence
\[ (sl(D)+sl(\overline{D}))+2 ( \ind_{+}(D)+\ind_{-}(D) ) \leq SL(K)+SL(\overline{K}) \]
On the other hand, (generalized) Jones' conjecture implies $SL(K)+SL(\overline{K})=-2b(K)$. Since $sl(D)+sl(\overline{D}) = -2O(D)$ by definition, we conclude
\[ -2O(D) + 2 ( \ind_{+}(D)+\ind_{-}(D) ) \leq -2b(K).\]
\end{proof}

Unlike the proof of original MP inequality where they explicitly construct a link diagram $D'$ with $O(D') = O(D)-\ind(D)$ from $D$, our proof of Theorem \ref{theorem:refined-MP} does not give a link diagram $D'$ with $O(D') = O(D)-(\ind_{+}(D)+\ind_{-}(D))$. Finding how to get such a link diagram $D'$ from $D$ would be an interesting question.

\begin{corollary}
For a diagram $D$ of a link $K$, 
If equalities hold in (\ref{eqn:MP-MFW}) and (\ref{eqn:MP-MFW2}), then $b(K)=O(D)-\ind_{-}(D)-\ind_{+}(D)=\frac{1}{2}\textrm{span}_{v}P_K(v,z) +1$ (in particular, the Morton-Franks-Williams inequality is equality).
\end{corollary}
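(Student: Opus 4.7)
The plan is to sandwich $b(K)$ between the Morton-Franks-Williams lower bound and the refined MP inequality just established in Theorem \ref{theorem:refined-MP}, using the equalities in (\ref{eqn:MP-MFW}) and (\ref{eqn:MP-MFW2}) to compute the $v$-span of the HOMFLY polynomial exactly.

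First, I would subtract the hypothesized equality in (\ref{eqn:MP-MFW}) from the equality in (\ref{eqn:MP-MFW2}). The writhe contributions $c_+(D)-c_-(D)$ cancel, and one obtains
\[ \mathrm{span}_v P_K(v,z) = 2\bigl( O(D) - 1 - \ind_+(D) - \ind_-(D)\bigr). \]
Dividing by $2$ and adding $1$ yields $\tfrac{1}{2}\mathrm{span}_v P_K(v,z) + 1 = O(D) - \ind_+(D) - \ind_-(D)$.

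Next, I would invoke two inequalities running in opposite directions. The Morton-Franks-Williams inequality $b(K) \geq \tfrac{1}{2}\mathrm{span}_v P_K(v,z) + 1$ gives the lower bound $b(K) \geq O(D) - \ind_+(D) - \ind_-(D)$. Theorem \ref{theorem:refined-MP} supplies the matching upper bound $b(K) \leq O(D) - \ind_+(D) - \ind_-(D)$. Hence all three quantities coincide, which is exactly the desired conclusion, and equality in MFW is an automatic byproduct.

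There is essentially no obstacle here: once the refined MP inequality is in hand, this corollary is a formal consequence, and the only computation is the cancellation of $c_+(D)-c_-(D)$ in the span calculation. The only thing worth being careful about is to state clearly that the hypothesis of equality in both (\ref{eqn:MP-MFW}) and (\ref{eqn:MP-MFW2}) is what allows us to evaluate $\mathrm{span}_v P_K(v,z)$ exactly, rather than as a one-sided bound.
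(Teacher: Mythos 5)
Your argument is correct and matches the paper's proof: both compute $\tfrac{1}{2}\mathrm{span}_v P_K(v,z)+1 = O(D)-\ind_+(D)-\ind_-(D)$ from the two assumed equalities and then sandwich $b(K)$ between the Morton--Franks--Williams lower bound and the refined MP upper bound of Theorem \ref{theorem:refined-MP}. No issues to report.
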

\begin{proof}
If equalities hold in (\ref{eqn:MP-MFW}) and (\ref{eqn:MP-MFW2}) then
$\frac{1}{2}\textrm{span}_{v}P_K(v,z) +1 = O(D)-(\ind_{+}(D)+\ind_{-}(D))$.
By the Morton-Franks-Williams inequality and refined MP inequality 
\[ \frac{1}{2}\textrm{span}_{v}P_K(v,z) +1 \leq b(K) \leq  O(D)-(\ind_{+}(D)+\ind_{-}(D))\]
\end{proof}

\section*{Acknowledgement}
The author has been partially supported by JSPS KAKENHI Grant Number 19K03490,16H02145. He is grateful to Stepan Yu Orevkov for inspiring the study of positivity of homogeneous quasipositive links.


\begin{thebibliography}{1}
\bibitem[Ab]{Abe}
T. Abe, {\em The Rasmussen invariant of a homogeneous knot,}
Proc. Amer. Math. Soc. 139 (2011), no. 7, 2647--2656.

\bibitem[AT]{AT}
T, Abe and K. Tagami,
{\em Characterization of positive links and the s-invariant for links.}
Canad. J. Math. 69 (2017), no. 6, 1201--1218.

\bibitem[Ba1]{Baader1} S, Baader,
{\em Slice and Gordian numbers of track knots.}
Osaka J. Math. 42 (2005), no. 1, 257--271.

\bibitem[Ba2]{Baader2}
S. Baader,
{\em Quasipositivity and homogeneity,}
Math. Proc. Cambridge Philos. Soc. 139 (2005), no. 2, 287--290.

\bibitem[BW]{BW} A.\ Beliakova and S.\ Wherli
{\em Categorification of the colored Jones polynomial and Rasmussen invariant of links,}
Canad. J. Math. 60 (2008), no. 6, 1240--1266.

\bibitem[Cr]{Cr}
P.\ Cromwell, 
{\em Homogeneous links.}
J. London Math. Soc. (2) 39 (1989), no. 3, 535--552.

\bibitem[DHL]{DHL}
Y.\ Diao, G.\ Hetyei, and P. Liu,
{\em The braid index of reduced alternating links.}
Math. Proc. Cambridge Philos. Soc. 168 (2020), no. 3, 415--434.

\bibitem[DP]{DP}
I,\ Dynnikov, and M.\ Prasolov, 
{\em Bypasses for rectangular diagrams. A proof of the Jones conjecture and related questions}
Trans. Moscow Math. Soc. 2013, 97--144


\bibitem[HIK]{HIK}
J. Hamer, T. Ito, and K. Kawamuro,
{\em Positivities of knots and links and the defect of Bennequin inequality,}
Experiment. Math. to appear.

\bibitem[LM]{LM}
D.\ LaFountain, and W.\ Menasco, 
{\em Embedded annuli and Jones' conjecture.}
Algebr. Geom. Topol. 14 (2014), no. 6, 3589--3601.

\bibitem[MP]{MP}
K.\ Murasugi, and J.\ Przytycki, 
{\em An index of a graph with applications to knot theory},
Mem. Amer. Math. Soc. 106 (1993), no. 508, x+101 pp.
\bibitem[Or]{Ore}
S. Yu.\ Orevkov,
{\em On alternating quasipositive links,}
arXiv:2007.00093v1

\bibitem[Ra]{Ra}
J.\ Rasmussen,
{\em Khovanov homology and the slice genus.}
Invent. Math. 182 (2010), no. 2, 419--447.

\bibitem[Sh]{Shu}
A. Shumakovitch,
{\em Rasmussen invariant, slice-Bennequin inequality, and sliceness of knots.}
J. Knot Theory Ramifications 16 (2007), no. 10, 1403--1412.

\bibitem[Vo]{Vo}
P. \ Vogel, 
{\em Representation of links by braids: a new algorithm.}
Comment. Math. Helv. 65 (1990), no. 1, 104--113.

\bibitem[Ya]{Ya}
S.\ Yamada,
{\em The minimal number of Seifert circles equals the braid index of a link.}
Invent. Math. 89 (1987), no. 2, 347--356.
\end{thebibliography}
\end{document}